\newtheorem{theorem}{Theorem}
\newtheorem{lemma}[theorem]{Lemma}
\newtheorem{cor}[theorem]{Corollary}
\theoremstyle{definition}
\newtheorem{definition}[theorem]{Definition}
\newtheorem{example}[theorem]{Example}
\theoremstyle{remark}
\numberwithin{equation}{section}
\title{R-harmonious groups}
\author{ Mohammad Javaheri  \\
Department of Mathematics \\
Siena University \\
515 Loudon Road, Loudonville, NY 12211, USA\\
\small{mjavaheri@siena.edu}  
}
\date{December 2025}
\begin{document}

\maketitle

\begin{abstract}
A group is R-harmonious if there exists a permutation $g_1,g_2,\ldots, g_{n-1}$ of the non-identity elements of $G$ such that the consecutive products $g_1g_2$, $g_2g_3$, $\ldots, g_{n-1}g_1$ also form a permutation of the non-identity elements, where $n=|G|$. We investigate R-harmonious groups via cyclic and split extensions. Among our results, we prove that every group of odd-order not divisible by 3 is R-harmonious.
\end{abstract}

Keywords: Harmonious groups, complete mappings

MSC(2020): 05E16

\section{Introduction}

A \emph{harmonious sequence} in a finite group $G$ is a permutation $g_0, g_1, \ldots, g_{n-1}$ of the elements of $G$ such that the consecutive products $g_0 g_1,g_1 g_2, \ldots, g_{n-1} g_0$ also form a permutation of the elements of $G$, where $n = |G|$. Harmonious sequences were introduced by Beals, Gallian, Headley, and Jungreis \cite{JG} in connection with the study of
\emph{complete mappings}, that is, bijections $\phi : G \to G$ for which the map $x \longmapsto x\phi(x)$ is also a bijection \cite{Paige}. By the Hall--Paige theorem
\cite{Bray, Evans, HP, Wilcox}, a finite group $G$ admits a complete mapping if and only if every Sylow $2$-subgroup of $G$ is either trivial or non-cyclic (the {Hall--Paige condition}). Since every harmonious sequence yields a complete mapping, every harmonious group must satisfy the Hall--Paige condition.

In the abelian case, the Hall–Paige condition is equivalent to requiring that the group be {\it non-binary}, that is, not possessing a unique element of order 2. In addition, the elementary 2-groups $(\mathbb{Z}_2)^n$, $n\geq 1$, are not harmonious, since in these groups no two distinct adjacent terms can sum to 0. Beals et al.\ proved that all abelian groups except the binary groups and the elementary 2-groups are harmonious \cite{JG}.

Every odd-order group is harmonious \cite{JG}. Some examples of non-abelian even-order harmonious groups are also known such as the dihedral and dicyclic groups of order $8n$, $n>1$, are harmonious \cite{JG, Wang, WL}. Although no formal conjecture has been published, it is widely expected that every group satisfying the Hall–Paige condition, except the elementary 2-groups, is harmonious. This expectation is supported by the recent work of Müesser and Pokrovskiy \cite{MP}, who showed that all sufficiently large groups with the Hall–Paige property, except the elementary 2-groups, are harmonious.

Beals et al.\ also considered the variant in which the identity element is removed. They used the term \#-harmonious to describe groups $G$ for which $G\setminus\{1_G\}$ is harmonious. In analogy with the well-established term {\it R-sequenceable} \cite{Ollis}, which concerns consecutive quotients rather than products, we prefer the term R-harmonious.
\begin{definition}
A group $G$ is {\it R-harmonious} if there exists a permutation $g_1,g_2,\ldots, g_{n-1}$ of the non-identity elements of $G$ such that the consecutive products $g_1g_2,g_2g_3, \ldots, g_{n-1}g_1$ also form a permutation of the non-identity elements of $G$, where $n=|G|$. 
\end{definition}

Like harmonious groups, R-harmonious groups must satisfy the Hall–Paige condition. However, in contrast to the harmonious case, all noncyclic elementary 2-groups are R-harmonious. Moreover, every abelian group that is not binary is R-harmonious except $\mathbb{Z}_3$ \cite{JG}. Beyond these abelian results, very little seems to be known about R-harmonious groups in the literature. In this article, we study R-harmonious groups and provide a variety of new non-abelian examples.

In Section \ref{2}, we define the notion of harmoniously matched integer sequences of length $n$ and prove such sequences exist if $n$ is odd and $n\neq 3 \pmod{12}$. We use these sequences in Section \ref{3} to show that if $H$ is an odd-order normal subgroup of $G$ and $G/H \cong \mathbb{Z}_n$, where $n$ is odd and $n\neq 3 \pmod{12}$, then $G$ is R-harmonious (Theorem \ref{main1}). This proves, in particular, that all odd-order groups whose abelianization is not an elementary 3-group (for example, when the order is not divisible by 3) are R-harmonious. 

In Section \ref{4}, we consider split extensions and show that if $K$ is a non-binary abelian group of order $>3$ and $H$ is an odd-order group, then the semidirect product $K \ltimes H$ is R-harmonious (Theorem \ref{main2}). This yields many non-abelian even-order R-harmonious groups including dihedral groups of order $16n$, where $n$ is odd.

The dihedral group of order 8 is not R-harmonious. Aside from $\mathbb{Z}_3$ and $D_8$, however, it is expected that all groups that satisfy the Hall--Paige condition are R-harmonious. This is supported by M\"{u}esser and Pokrovskiy's results that show, with a slight change to their argument for the harmonious case, all sufficiently large groups that satisfy the Hall--Paige condition are R-harmonious.

\section{Harmoniously matched integer sequences}\label{2}

In this section, we construct two special integer sequences of lengths $n$ and $n-1$, where $n$ is odd and $n \neq 3 \pmod{12}$. To motivate the construction and to preview the key role these sequences will play in the proof of the main theorem in the next section (Theorem \ref{main1}), we first illustrate their significance with an example.

Let $H$ be an odd-order harmonious normal subgroup of a group $G$ and suppose that $G/H\cong \mathbb{Z}_5$. Let ${\bf h}: h_0,h_1,\ldots, h_{m-1}$ be a {\it symmetric harmonious} sequence in $H$, meaning ${\bf h}$ is a harmonious sequence in $H$ with $h_ih_{m-i}=1_G$ for all $1\leq i \leq m-1$. Every odd-order group has such a symmetric harmonious sequence. 
Let $x\in G\setminus H$. Then the sequence
$$
\begin{array}{llllll}
& x^2 &  x^{-1} &  x^{-2} &  x & \\
h_1x^2 & x^{-2}h_1& h_1x^{-1} & xh_1 x^{-1} & xh_1\\
h_2x^2 & x^{-2}h_2& h_2x^{-1} & xh_2 x^{-1} & xh_2\\
\vdots & \vdots & \vdots & \vdots & \vdots \\
h_{m-1}x^2 & x^{-2}h_{m-1}& h_{m-1}x^{-1} & xh_{m-1} x^{-1} & xh_{m-1}
\end{array}
$$
is an R-harmonious sequence in $G$. What makes this construction work is that the sequences of the exponents of $x$ for the bottom rows, $2,-2,-1,0,1$, and the top row, $2,-1,-2,1$, are harmoniously matched. 

\begin{definition}
A {\it harmonious} integer sequence is a sequence $k_0,k_1,\ldots, k_{n-1}$ of integers such that for all $i,j \in \{0,1,\ldots, n-1\}$:
$$i \neq j \Rightarrow (k_i \neq k_j)  \wedge (k_i+k_{i+1} \neq k_j+ k_{j+1}) \pmod n,$$
where we let $k_n=k_0$. A harmonious integer sequence ${\bf k}: k_0,k_1,\ldots, k_{n-1}$ is {\it harmoniously matched} with the integer sequence ${\bf k'}: k_1',k_2',\ldots, k_{n-1}'$ if ${\bf k'}$ is a rearrangement of the nonzero terms of ${\bf k}$ such that $k_0=k_1'$, $k_{n-1}=k_{n-1}'$, and 
\begin{align*}
\left \{k_0+k_1,k_1+k_2,\ldots, k_{n-1}+k_0 \right \} & = \{0 \} \cup \left  \{k_1'+k_2',k_2'+k_3', \ldots, k_{n-1}'+k_1' \right \}.
\end{align*}
\end{definition}

\begin{example}\label{mod9}
The harmonious integer sequence
$$10,-10,-3,4,2,0,-2,-4,3,$$
is harmoniously matched with 
$$10, -4, 2, -3, -10, 4, -2, 3.$$ 
\end{example}

\begin{theorem}\label{intharm}
If $n$ is odd and $n\neq 3 \pmod {12}$, then there exists a harmonious integer sequence $k_0,k_1,\ldots, k_n$ that is harmoniously matched with some rearrangement of its nonzero terms and $k_0+k_1+\cdots+k_n=0$. 
\end{theorem}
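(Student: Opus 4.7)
The plan is to construct the required integer sequences explicitly, with cases determined by the residue of $n$ modulo $12$. Since $n$ is odd and $n \not\equiv 3 \pmod{12}$, the allowed residues $n \equiv 1, 5, 7, 9, 11 \pmod{12}$ split into two regimes: the coprime case $\gcd(n,6) = 1$ (when $n \equiv 1, 5, 7, 11 \pmod{12}$) and the case $3 \mid n$ with $n \equiv 9 \pmod{12}$.

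Motivated by Example \ref{mod9}, first I would impose the antisymmetric ansatz
$$a,\;-a,\;b_1,\;b_2,\;\ldots,\;b_{(n-3)/2},\;0,\;-b_{(n-3)/2},\;\ldots,\;-b_2,\;-b_1,$$
which automatically yields $k_0 + k_1 + \cdots + k_{n-1} = 0$ as integers. The problem then reduces to selecting integers $a, b_1, \ldots, b_{(n-3)/2}$ so that (i) the residues $\{\pm a\} \cup \bigcup_i\{\pm b_i\} \cup \{0\}$ exhaust $\mathbb{Z}_n$; (ii) the consecutive sums of this sequence cover $\mathbb{Z}_n$; and (iii) some rearrangement ${\bf k}'$ of the nonzero terms beginning at $a$ and ending at $-b_1$ has consecutive sums covering $\mathbb{Z}_n \setminus \{0\}$. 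A useful feature of the ansatz is that the consecutive sums of ${\bf k}$ automatically come in $\pm$-pairs outside the initial $a+(-a)=0$: a short calculation shows $s_{n-j} = -s_j$ for $j \geq 1$ where $s_j = k_j+k_{j+1}$. Thus (ii) reduces to arranging $\{-a+b_1,\; b_1+b_2,\; \ldots,\; b_{(n-4)/2}+b_{(n-3)/2},\; b_{(n-3)/2}\}$ as a transversal of the $\pm$-action on $\mathbb{Z}_n \setminus \{0\}$.

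For the coprime regime, I would take $b_i$ to be integer lifts of an arithmetic progression modulo $n$, for instance $b_i \equiv -(2i+1) \pmod{n}$, so that $b_i + b_{i+1} \equiv -4(i+1) \pmod{n}$; coprimality of $n$ with $2$ and $3$ then guarantees that the resulting residues, together with $b_{(n-3)/2}$ and $-a+b_1$ for a suitably chosen $a$, form the required transversal. (The example with $n=9$ fits this template with $a \equiv 1$ and $b_i \equiv -(2i{+}1)$.) For $n \equiv 9 \pmod{12}$, a single arithmetic progression cannot span $\mathbb{Z}_n$ because $3 \mid n$, so I would use two interleaved progressions --- one covering residues coprime to $3$ and one covering the nonzero multiples of $3$ --- joined so that the transition term still lands in the complementary class, with the integer magnitude of $a$ absorbing any integer-sum discrepancy. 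The matched sequence ${\bf k}'$ can be produced by an explicit shuffling rule that visits the antisymmetric pairs of the middle block in a prescribed order (as in the example, where ${\bf k}'$ is obtained from ${\bf k}$ by traversing the $\pm$-pairs $\{k_2,k_8\}, \{k_3,k_7\}, \{k_4,k_6\}$ in a fixed cyclic order on two passes).

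The main obstacle will be condition (iii), which couples two rearrangements of the same underlying set through the shared endpoint constraints $k_0 = k_1'$ and $k_{n-1} = k_{n-1}'$; ensuring that a valid rearrangement even exists is the crux of the argument. I expect the hypothesis $n \not\equiv 3 \pmod{12}$ to enter precisely here, since when $3 \mid n$ and $n \equiv 3 \pmod 4$ the parities of the $3$-coprime and $3$-divisible halves of $\mathbb{Z}_n$ fail to align with the two halves of the antisymmetric block, obstructing any matching. Once the $b_i$ pattern and the shuffling rule are specified, verification of (i)--(iii) for each allowed residue class is a mechanical case-by-case check.
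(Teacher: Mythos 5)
Your framework is the right one and matches the paper's in spirit: an explicitly constructed sequence with the antisymmetric shape $a,-a,b_1,\ldots,b_{(n-3)/2},0,-b_{(n-3)/2},\ldots,-b_1$ (which is exactly the structure the paper's formulas have, via $k_0+k_1=0$ and $k_i+k_{n+1-i}=0$), a reduction of the harmoniousness conditions to covering $\mathbb{Z}_n$ by the residues and realizing a transversal of the $\pm$-action by half of the consecutive sums, and a case split on $n$ modulo $12$. Your analysis of conditions (i) and (ii) for the single progression $b_i\equiv-(2i+1)$ is essentially sound (though note that only $\gcd(n,2)=1$ is used there, not coprimality to $3$, and you cite the $n=9$ example as an instance of your ``coprime'' template even though $3\mid 9$ --- a sign that your proposed case division is not actually tracking where the hypothesis $n\not\equiv 3\pmod{12}$ does work).

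The genuine gap is condition (iii), which you correctly identify as ``the crux'' and then do not address. The matched rearrangement ${\bf k}'$ is never constructed: ``an explicit shuffling rule that visits the antisymmetric pairs \ldots\ in a prescribed order'' is a description of what a solution would look like, not a solution, and the claim that verification is then ``a mechanical case-by-case check'' presupposes that a correct rule has been written down. This step is the entire content of the theorem --- producing a harmonious sequence in $\mathbb{Z}_n$ with zero integer sum is easy; producing one together with a rearrangement of its nonzero terms whose consecutive sums reproduce the same multiset minus $\{0\}$, with matching endpoints, is not. The paper's proof consists almost entirely of engineering this: the middle block is built from \emph{three} interleaved arithmetic progressions (the formulas split on $i\bmod 3$), and the rearrangement is the explicit stride-$2$ permutation $\sigma(j)\approx n+2-2j$, chosen so that the sums $k_{\sigma(i)}+k_{\sigma(i)-2}$ of terms two apart land back in the set of adjacent sums $k_j+k_{j+1}$; the mod-$3$ interleaving is precisely what makes that identity close up. With your single progression there is no evidence that any admissible $\sigma$ exists, and your heuristic for why $n\equiv 3\pmod{12}$ should be the exceptional class is speculation rather than argument. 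Until you exhibit a concrete rearrangement and check the endpoint and sum conditions for it in each residue class, the proof is incomplete at its central point.
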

\begin{proof}
For $n=6k+1$, $k\geq 1$, and $0\leq i \leq n-1$, define

$$k_{i}=\begin{cases}
4k & \mbox{if $i=0$},\\
4i/3 -2k-1 & \mbox{if $0<i \leq  3k$ and $i=0 \pmod 3$},\\
4(i-1)/3  -4k & \mbox{if $0< i \leq 3k$ and $i=1 \pmod 3$},\\
4(i+1)/3  -2& \mbox{if $0< i \leq 3k$ and $i=2 \pmod 3$},\\
4i/3 -8k-2 & \mbox{if $3k+1\leq i \leq 6k$ and $i=0 \pmod 3$},\\
4(i-1)/3  -4k & \mbox{if $3k+1\leq i \leq 6k$ and $i=1 \pmod 3$},\\
4(i+1)/3  -6k-3 & \mbox{if $3k+1\leq i \leq 6k$ and $i=2 \pmod 3$}.
\end{cases}
$$

For $n=6k-1$, $k\geq 1$, and $0\leq i \leq n-1$, define

$$k_{i}=\begin{cases} 
4k-2 & \mbox{if $i=0$},\\
8i/3 -2k-1 & \mbox{if $0<i <3k$ and $i=0 \pmod 3$},\\
8(i-1)/3  -4k+2 & \mbox{if $0< i< 3k$ and $i=1 \pmod 3$},\\
8(i+1)/3  -6k-3& \mbox{if $0< i < 3k$ and $i=2 \pmod 3$},\\
0 & \mbox{if $i=3k$},\\
8i/3 -14k+1 & \mbox{if $3k+1\leq i \leq 6k-2$ and $i=0 \pmod 3$},\\
8(i-1)/3  -10k+3 & \mbox{if $3k+1\leq i \leq 6k-2$ and $i=1 \pmod 3$},\\
8(i+1)/3  -12k-2 & \mbox{if $3k+1\leq i \leq 6k-2$ and $i=2 \pmod 3$}.
\end{cases}
$$

For $n=12k-3$, $k\geq 1$, and $0\leq i \leq n-1$, define

$$k_{i}=\begin{cases}
16k-6 & \mbox{if $i=0$},\\
4i-8k & \mbox{if $0<i \leq  6k-2$ and $i=0 \pmod 3$},\\
4i+2-16k & \mbox{if $0< i\leq 6k-2$ and $i=1 \pmod 3$},\\
4i+1-12k& \mbox{if $0< i \leq 6k-2$ and $i=2 \pmod 3$},\\
0 & \mbox{if $i=6k-1$},\\
4i+6 -32k & \mbox{if $6k \leq i \leq 12k-4$ and $i=0 \pmod 3$},\\
4i+8-40k  & \mbox{if $6k \leq i \leq 12k-4$ and $i=1 \pmod 3$},\\
4i+7-36k& \mbox{if $6k \leq i \leq 12k-4$ and $i=2 \pmod 3$}.
\end{cases}
$$

Let $\sigma:\{1,2,\ldots, n-1\} \rightarrow \{0,1,\ldots, n-1\}$ be the following map:

$$\sigma(j)=\begin{cases}
0 & \mbox{if $j=1$},\\
n+2-2j & \mbox{if $1< j \leq \lfloor (n+1)/4 \rfloor$},\\
n+1-2j& \mbox{ if $\lfloor (n+1)/4 \rfloor < j < (n+1)/2$},\\
2j-n & \mbox{if $(n+1)/2 \leq j < \lfloor 3(n+1)/4 \rfloor $},\\
2j-n+1& \mbox{if $\lfloor 3(n+1)/4 \rfloor \leq j \leq n-1$}.
\end{cases}
$$

Finally, let $k_i'=k_{\sigma(i)}$, $1\leq i \leq n-1$. Then the sequences $k_0,k_1,\ldots, k_{n-1}$ and $k_1',k_2',\ldots, k_{n-1}'$ are harmoniously matched. To see that both sequences give rise to the same nonzero consecutive sums, we need to show that for every $1\leq i \leq n-1$ there exists $0\leq j \leq n-1$ such that $k_i'+k_{i+1}'=k_j+k_{j+1}$. Consider the case where $n=6k+1$ and $1<i < \lfloor (n+1)/4 \rfloor$. The proof is similar in the other cases. Let $j=3k+1-2i$. If $i=0 \pmod 3$, then 
\begin{align*}
k_i'+k_{i+1}' & =k_{\sigma(i)}+k_{\sigma(i+1)}=k_{n+2-2i}+k_{n-2i} \\
& =4(n+2-2i)/3-8k-2+4(n-2i-1)/3-4k  \\
& =4k+2-16i/3.\\
k_j+k_{j+1} & =4(j-1)/3-4k+4(j+2)/3-2 \\ 
& = (8j+4)/3-4k-2=4(n+2)/3-16i/3-4k-2 \\
& = 4k+2-16i/3.
\end{align*}
If $i=1 \pmod 3$, then
\begin{align*}
k_i'+k_{i+1}'  & =k_{\sigma(i)}+k_{\sigma(i+1)}=k_{n+2-2i}+k_{n-2i} \\
&= 4(n+2-2i-1)/3-4k+4(n-2i+1)/3-6k-3\\
& =6k-16(i-1)/3-3.\\
k_j+k_{j+1} & =4(j+1)/3-2+4(j+1)/3-2k-1 \\ 
& = (8j+8)/3-2k-3=(24k+8-16i+8)/3-2k-3 \\
& = 6k-16(i-1)/3-3.
\end{align*}
If $i=2 \pmod 3$, then 
\begin{align*}
k_i'+k_{i+1}'  & =k_{\sigma(i)}+k_{\sigma(i+1)}=k_{n+2-2i}+k_{n-2i} \\
&= 4(n+2-2i+1)/3-6k-3+4(n-2i)/3-8k-2\\
& =2k-16(i+1)/3+7.\\
k_j+k_{j+1} & =4j/3-2k-1+4j/3-4k \\ 
& = 8j/3-6k-1 \\
& = 2k-16(i+1)/3+7.
\end{align*}
The remaining conditions for the harmonious matching can be verified by direct (albeit tedious) calculation. Finally, the sum $k_0+k_1+\cdots+k_{n-1}=0$ follows from pairwise cancellations $k_0+k_1=0$ and $k_{i}+k_{n+1-i}=0$ for all $2\leq i \leq n-1$. 
\end{proof}

\section{Cyclic extensions}\label{3}

In this section, we consider cyclic extensions $1 \rightarrow H \rightarrow G \rightarrow \mathbb{Z}_n \rightarrow 1$ and show that if $H$ is an odd-order group and $n$ is odd with $n\neq 3 \pmod{12}$, then $G$ is harmoniously matched. Beals et al.\ used the term {\it harmoniously matched group} in a slightly different sense, but it agrees with our definition in the abelian case, which is the only case they considered. Consequently, we take the liberty of adopting the same term here.

\begin{definition}
A finite group $G$ is {\it harmoniously matched} if there exist a harmonious sequence $g_0,g_1,\ldots, g_{n-1}$ and an R-harmonious sequence $g_1',g_2',\ldots, g_{n-1}'$ in $G$ such that $g_0=g_1'$, $g_{n-1}=g_{n-1}'$, and 
$$g_0g_1\cdots g_{n-1}=g_1'g_2'\cdots g_{n-1}'=1_G.$$
\end{definition}

For example, the sequences $1,-1,-3,4,2,0,-2,-4,3$, and $1, -4, 2, -3, -1, 4, -2, 3$, are harmoniously matched in $\mathbb{Z}_9$. The dihedral group $D_8=\langle r, s | r^4=s^2=(sr)^2=1\rangle$ is not R-harmonious, hence it is not harmoniously matched. 

\begin{example}\label{dihedral}
The following sequences show that $D_{12}=\langle r, s | r^6=s^2=(sr)^2=1\rangle$ is harmoniously matched.
$$
\begin{array}{lllllllllllll}
{\bf g}: & r & 1 & r^3 & s & r^5 & rs & r^2 & r^4 & r^4s & r^2s & r^3s & r^5s,\\
{\bf g'}:& r &  r^2 & r^3 & s & r^4 & r^3s & r^2s & r^4s & r^5& rs& r^5s. &
\end{array}
$$
The following sequences show that $D_{16}=\langle r, s | r^8=s^2=(sr)^2=1\rangle$ is also harmoniously matched. 
$$
\begin{array}{lllllllllllllllll}
{\bf g}: & r^2&r&r^6&s&r^3s&r^7&r^3&r^5&r^4s&r^6s&r^5s&rs&r^4&r^7s&1&r^2s,\\
{\bf g'}: & r^2 & r^5&r^6s&r^4&r^3s&rs&s&r^5s&r^7s&r^3&r&r^4s&r^6&r^7  &r^2s. &
\end{array}
$$
\end{example}

\begin{lemma}\label{mext}
Let $H$ be an odd-order normal subgroup of $G$ and $G/H \cong \mathbb{Z}_n$, where $n$ is odd and $n\neq 3 \pmod {12}$. Then $G$ is harmoniously matched. 
\end{lemma}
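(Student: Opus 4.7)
The plan is to generalize the construction sketched at the start of Section~\ref{2} (the $G/H \cong \mathbb{Z}_5$ example) uniformly in $n$. Fix a coset representative $x \in G$ with $xH$ a generator of $G/H \cong \mathbb{Z}_n$, and write $\phi_k$ for the automorphism of $H$ given by conjugation by $x^k$. Since $|H|$ is odd, $H$ is harmonious and admits a \emph{symmetric} harmonious sequence $h_0 = 1_H, h_1, \ldots, h_{m-1}$ with $h_i h_{m-i} = 1_H$. Apply Theorem~\ref{intharm} to obtain a harmonious integer sequence $k_0, k_1, \ldots, k_{n-1}$ with $\sum_i k_i \equiv 0 \pmod{n}$, harmoniously matched with a rearrangement $k_1', \ldots, k_{n-1}'$ of its nonzero terms so that $k_0 = k_1'$ and $k_{n-1} = k_{n-1}'$. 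Set $S_{-1} = 0$ and $S_j = k_0 + k_1 + \cdots + k_j$, and note $S_{n-1} \equiv 0 \pmod{n}$.

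I will construct the harmonious sequence $\mathbf{g}$ by filling an $m \times n$ grid row by row using the uniform formula
\[
g_{i,j} \;=\; x^{-S_{j-1}}\, h_i\, x^{S_j}, \qquad 0 \le i \le m-1,\ 0 \le j \le n-1.
\]
Row $0$ reduces to $x^{k_0}, x^{k_1}, \ldots, x^{k_{n-1}}$, which includes $1_G$. The R-harmonious sequence $\mathbf{g}'$ uses the same rows for $i \ge 1$, but with row $0$ replaced by $x^{k_1'}, x^{k_2'}, \ldots, x^{k_{n-1}'}$, omitting $1_G$. In the $\mathbb{Z}_5$ case this recovers the five-entry template of the example.

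The key computation is
\[
g_{i,j}\, g_{i,j+1} \;=\; x^{-S_{j-1}}\, h_i^2\, x^{S_{j+1}},
\]
which lies in coset $x^{k_j + k_{j+1}}$ with $H$-part $\phi_{-S_{j-1}}(h_i^2)$, while the boundary product at the transition from row $i$ to row $i+1$ equals $\phi_{k_{n-1}}(h_i h_{i+1})\, x^{k_{n-1}+k_0}$ (using $S_{n-1}=0$). Verification then splits coset by coset. In each coset $x^{k_j}$, the listed $H$-parts are $\phi_{-S_{j-1}}(h_i)$ for $0 \le i \le m-1$, covering the coset (identity excluded in $\mathbf{g}'$) because $\phi_{-S_{j-1}}$ is an automorphism of $H$. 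Internal-row products in a nontrivial coset $x^{k_j + k_{j+1}}$ have $H$-parts $\phi_{-S_{j-1}}(h_i^2)$ with $1 \le i \le m-1$, which cover $H \setminus \{1_H\}$ because squaring is a bijection on odd-order $H$; the lone row-$0$ contribution $x^{k_j + k_{j+1}}$ supplies the identity element of the coset in $\mathbf{g}$. Boundary products in coset $x^{k_{n-1}+k_0}$ have $H$-parts $\phi_{k_{n-1}}(h_i h_{i+1})$ for $0 \le i \le m-1$ cyclically, covering all of $H$ by the harmonious property of $\mathbf{h}$. The harmoniously matched condition ensures that the coset $x^0$ receives exactly $m-1$ products in $\mathbf{g}'$ (no row-$0$ contribution) and $m$ products in $\mathbf{g}$, exactly matching the coset sizes.

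The endpoint conditions $g_0 = g_1'$ and $g_{|G|-1} = g_{|G|-1}'$ follow from $k_0 = k_1'$, $k_{n-1} = k_{n-1}'$, and the fact that rows $i \ge 1$ are shared. The row product telescopes to $\prod_j g_{i,j} = h_i^n$, so the total product is $\prod_i h_i^n$, which equals $1_G$ because in a symmetric harmonious sequence $h_{m-i} = h_i^{-1}$ and the outer factors cancel in reverse order. The main obstacle is finding the uniform formula $g_{i,j} = x^{-S_{j-1}} h_i x^{S_j}$, under which adjacent products collapse cleanly to a single squared term $h_i^2$ up to conjugation; once this is in place, the remainder reduces to coset-by-coset bookkeeping using the harmonious property of $\mathbf{h}$, the harmoniously matched property of $\mathbf{k}$ and $\mathbf{k}'$, and the bijectivity of squaring on odd-order $H$.
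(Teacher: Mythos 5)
Your construction is identical to the paper's: your uniform formula $g_{i,j}=x^{-S_{j-1}}h_i x^{S_j}$ is exactly the paper's $\sigma_{j-1}^{-1}h_p\sigma_j$ with $\sigma_j=x^{S_j}$ (the two cases of the paper's definition collapse into one because $h_0=1_H$), and your coset-by-coset counting of terms and products is the same verification the paper carries out by supposing two terms or two consecutive products coincide. The proposal is correct and takes essentially the same approach as the paper.
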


\begin{proof}
Let $k_0,k_1,\ldots, k_{n-1}$ be a harmonious integer sequence that is harmoniously matched with the rearrangement $k_1',k_2', \ldots, k_{n-1}'$ of its nonzero terms and $k_0+k_1+\cdots+k_{n-1}=0$. Such sequences exist by Theorem \ref{intharm}. Choose $x\in G$ so that the coset $xH$ is a generator of $G/H$, and so $Hx^r=Hx^s$ if and only if $r=s \pmod n$. 
Define
$$\sigma_i=x^{k_0+\ldots +k_i}, ~ 0\leq i \leq n-1.$$
Then $\sigma_{n-1}=x^{k_0+\cdots +k_{n-1}}=x^0=1_G$. Let $h_0,h_1,\ldots, h_{m-1}$ be a symmetric harmonious sequence in $H$ such that $h_ih_{m-i}=1_G$ for all $1\leq i \leq m-1$. Then the sequence
\begin{equation}\label{defgprime}
\begin{array}{ccccc}
x^{k_0}& x^{k_1} & x^{k_2}&    \ldots &  x^{k_{n-1}}\\
h_1\sigma_0 & \sigma_0^{-1}h_1\sigma_1 & \sigma_1^{-1}h_1\sigma_2 & \ldots  & \sigma_{n-2}^{-1}h_1\sigma_{n-1}\\
h_2\sigma_0 & \sigma_0^{-1}h_2\sigma_1 & \sigma_1^{-1}h_2\sigma_2 & \ldots  & \sigma_{n-2}^{-1}h_2\sigma_{n-1}\\
\vdots & \vdots & \vdots & \vdots & \vdots  \\
h_{m-1}\sigma_0 & \sigma_0^{-1}h_{m-1}\sigma_1 & \sigma_1^{-1}h_{m-1}\sigma_2 & \ldots  & \sigma_{n-2}^{-1}h_{m-1}\sigma_{n-1}
\end{array}
\end{equation}
is harmonious. To be more precise, for $0\leq i \leq mn-1$, we write $i=pn+r$ with $1\leq q \leq m-1$ and $0\leq r \leq n-1$, and define (letting $\sigma_{-1}=\sigma_{n-1}=1_G$)
\begin{equation}
g_i=\begin{cases}\label{eqdefg}
x^{k_r} & \mbox{if $p=0$ and $0 \leq r\leq n-1$},\\
\sigma_{r-1}^{-1}h_p \sigma_r & \mbox{if $1\leq p \leq m-1$ and $0\leq r \leq n-1$}.
\end{cases}
\end{equation}
We first show that $g_0,g_1,\ldots, g_{mn-1}$ are all distinct. Suppose that $g_i=g_j$, where $i=pn+r$ and $j=qn+s$ with $p,q\in \{0,\ldots, m-1\}$ and $r,s \in \{0,\ldots, n-1\}$. If $p=q=0$, it is clear from \eqref{eqdefg} that $i=j$. Suppose $p,q>0$. Since
$$g_i=\sigma_{r-1}^{-1}h_p\sigma_r= \left (\sigma_{r-1}^{-1}h_p \sigma_{r-1} \right )x^{k_r} \in Hx^{k_r},$$
it follows from $g_i=g_j$ that $k_r =k_s \pmod n$, and so $r=s$. It then follows from $g_i=g_j$ that $h_p=h_q$, which implies that $p=q$. Thus, suppose $p=0$ and $q>0$. It follows from $g_i=g_j$ that $\sigma_{r-1}^{-1}h_0 \sigma_r=x^{k_r}=g_i=g_j=\sigma_{s-1}^{-1}h_q \sigma_s$, so $r=s$ and consequently $h_0=h_q$, a contradiction with $q>0$. 
 
Next, we show that the consecutive products are also distinct. Suppose $g_ig_{i+1}=g_jg_{j+1}$, where $i=pn+r$ and $j=qn+s$ with $p,q\in \{0,\ldots, m-1\}$ and $r,s \in \{0,\ldots, n-1\}$. If $p=q=0$, it follows from \eqref{eqdefg} that $x^{k_r+k_{r+1}}=x^{k_s+k_{s+1}}$, which yields $k_r+k_{r+1}=k_s+k_{s+1} \pmod n$, so $r=s$. Suppose $p,q>0$. Since 
\begin{align*}
g_ig_{i+1}=\left (\sigma_{r-1}^{-1}h_p \sigma_{r} \right ) \left (\sigma_{r}^{-1}h_p \sigma_{r+1} \right ) & =\sigma_{r-1}^{-1}h_p^2 \sigma_{r+1} \\
&  =\left (\sigma_{r-1}^{-1}h_p^2 \sigma_{r-1} \right)x^{k_r+k_{r+1}}\in Hx^{k_r+k_{r+1}}
\end{align*}
It follows from $g_ig_{i+1}=g_jg_{j+1}$, that $k_r+k_{r+1}=k_s+k_{s+1} \pmod n$, hence $r=s$. Subsequently, $h_r^2=h_s^2$, so $r=s$, and so $i=j$. Thus, suppose that $p=0$ and $q>0$. It follows from $g_ig_{i+1}=g_jg_{j+1}$ that 
\begin{align*}
\sigma_{r-1}^{-1}h_0\sigma_{r+1}  =x^{k_r+k_{r+1}}=g_ig_{i+1}  =g_jg_{j+1}=\sigma_{s-1}h_q \sigma_{s+1},
\end{align*}
again leading to $r=s$ then $q=0$, which contradicts the assumption that $q>0$. It follows that $g_0,g_1,\ldots, g_{mn-1}$ is a harmonious sequence in $G$.

Similarly, the sequence $g_1',\ldots, g_{mn-1}'$ defined by replacing the first row in \eqref{defgprime} by $x^{k_1'}, x^{k_2'}, \ldots,$ $x^{k_{n-1}'}$ is an R-harmonious sequence in $G$. To be more precise, for $1\leq i \leq mn-1$, write $i=pn+r$ with $p\in \{0,1,\ldots, m-1\}$ and $r\in \{0,1,\ldots, n-1\}$, and define
\begin{equation}
g_i'=\begin{cases}\label{eqdefgp}
x^{k_r'} & \mbox{if $p=0$ and $1 \leq r\leq n-1$},\\
\sigma_{r-1}^{-1}h_p \sigma_r & \mbox{if $1\leq p \leq m-1$ and $0\leq r \leq n-1$}.
\end{cases}
\end{equation}
The terms $g_1',\ldots, g_{mn-1}'$ are all distinct, since they are a rearrangement of the non-identity terms of $g_0,g_1,\ldots, g_{mn-1}$. To see that the consecutive products are distinct, suppose that $g_i'g_{i+1}'=g_jg_{j+1}'$, where $i=pn+r$ and $j=qn+s$ with $p\in \{0,1,\ldots, m-1\}$ and $r\in \{0,1,\ldots, n-1\}$. If $p=q=0$, then $i=j$ by \eqref{eqdefgp}. If $p,q>0$, then $g_i'=g_i$ and $g_j'=g_j$ and so $i=j$. Thus, suppose that $p=0$ and $q>0$. Choose $t\in \{0,1\ldots, n-1\}$ such that $k_r'+k_{r+1}'=k_t+k_{t+1}$. It follows from $g_ig_{i+1}=g_jg_{j+1}$ that 
\begin{align*}
\sigma_{t-1}^{-1}h_0\sigma_{t+1}  =x^{k_t+k_{t+1}}=x^{k_r'+k_{r+1}'}=g_ig_{i+1}  =g_jg_{j+1}=\sigma_{s-1}h_q \sigma_{s+1},
\end{align*}
again leading to $t=s$ then $q=0$, which contradicts the assumption that $q>0$. It follows that $g_1',g_2',\ldots, g_{mn-1}'$ is an R-harmonious sequence in $G$

Both sequences \eqref{eqdefg} and \eqref{eqdefgp} start with $x^{k_0}=x^{k_1'}$ and end with $\sigma_{n-2}^{-1}h_{m-1}\sigma_{n-1}$. Finally, one has
$$g_1'g_2' \cdots g_{mn-1}'=g_0g_1\cdots g_{mn-1}=x^{k_0+\cdots +k_{n-1}}h_1^nh_2^n \cdots h_{m-1}^n=1_G,$$
since $k_0+k_1+\cdots+k_{n-1}=0$ and $h_1^nh_2^n\cdots h_{m-1}^n=1_G$. To see this, we note that since $h_0,h_1,\ldots, h_{m-1}$ is a symmetric harmonious sequence, one has $h_{t-i} h_{t+i+1}=1_G$ where $t=(m-1)/2$, and so
$$h_1^n\cdots h_{t}^n h_{t+1}^n \cdots h_{m-1}^n=h_1^n \cdots h_{t-1}^n h_{t+2}^n \cdots h_{m-1}^n=\ldots = 1_G.$$
 Therefore $G$ is harmoniously matched. 
\end{proof}

\begin{theorem}\label{main1}
Let $G \rightarrow G^{ab}=G/[G,G]$ be the abelianization of an odd-order group $G$. Suppose that $G^{ab}$ is not an elementary 3-group. Then $G$ is harmoniously matched. 
\end{theorem}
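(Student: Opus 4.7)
The plan is to reduce directly to Lemma \ref{mext}: it suffices to produce a normal subgroup $H\trianglelefteq G$ such that $G/H\cong \mathbb{Z}_n$ for some odd $n$ with $n\not\equiv 3\pmod{12}$. Since $|G|$ is odd, $H$ will automatically have odd order, so nothing more is needed to verify the hypothesis of the lemma. Because the target $\mathbb{Z}_n$ is abelian, any such surjection factors through the abelianization, so I only need to exhibit a cyclic quotient of $G^{ab}$ of the desired order.

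To produce this quotient I would split into two cases based on the structure of the finite odd-order abelian group $G^{ab}$. In the first case, $G^{ab}$ has a prime divisor $p\neq 3$; then I would project $G^{ab}$ onto its Sylow $p$-subgroup (a direct summand) and further onto a cyclic factor of order $p$. The only prime congruent to $3 \pmod{12}$ is $3$ itself (since $p\equiv 3\pmod{12}$ forces $3\mid p$), so $p\not\equiv 3\pmod{12}$, and $n=p$ is admissible. In the second case, $G^{ab}$ is a $3$-group; the hypothesis that it is not elementary guarantees a cyclic direct summand of order at least $9$, and projecting onto such a summand and then further down to order $9$ yields a cyclic quotient of order $9$, which is also not congruent to $3 \pmod{12}$.

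In either case, composing with the quotient map $G\to G^{ab}$ yields a surjection $G\twoheadrightarrow \mathbb{Z}_n$; its kernel $H$ is normal in $G$, has odd order, and satisfies $G/H\cong \mathbb{Z}_n$ with $n$ odd and $n\not\equiv 3\pmod{12}$. Lemma \ref{mext} then gives that $G$ is harmoniously matched. I do not anticipate any genuine obstacle: the theorem is essentially a direct packaging of Lemma \ref{mext} together with the elementary structural observation above on cyclic quotients of $G^{ab}$.
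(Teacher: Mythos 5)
Your proposal is correct and follows essentially the same route as the paper: the paper likewise observes that $G^{ab}$, being odd-order abelian and not elementary $3$-abelian, has a cyclic quotient $\mathbb{Z}_m$ with $m$ a prime greater than $3$ or $m=9$, pulls the corresponding normal subgroup back to $G$ via the correspondence theorem, and applies Lemma \ref{mext}. Your two-case analysis and the check that such $m$ is never $\equiv 3 \pmod{12}$ match the paper's argument.
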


\begin{proof}
Since $G^{ab}$ is odd-order abelian but not an elementary 3-group, it has a normal subgroup $C$ such that $G^{ab}/C \cong \mathbb{Z}_m$, where $m$ is an odd prime grater than 3 or $m=9$. By the correspondence theorem, since $C$ is normal in $G^{ab}=G/[G,G]$, there exists a normal subgroup $H$ in $G$ such that $C =H/[G,G]$. It follows that $G/H \cong G^{ab}/C \cong \mathbb{Z}_m$. The claim then follows from Lemma \ref{mext}. 
\end{proof}

\begin{cor}
Let $G$ be an odd-order group with a proper normal Sylow 3-subgroup. Then $G$ is harmoniously matched. 
\end{cor}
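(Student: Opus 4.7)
My plan is to deduce this directly from Theorem \ref{main1}: since $G$ is assumed to have odd order, it suffices to verify that the abelianization $G^{ab}$ is not an elementary 3-group, and then Theorem \ref{main1} delivers the conclusion.

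To check this, I would pass to the quotient by the given normal Sylow 3-subgroup $P$. Setting $H = G/P$, the group $H$ is nontrivial, because $P$ is proper, and its order is coprime to 3, because we have quotiented out the full 3-part of $G$. The canonical projection $G \twoheadrightarrow H$ induces a surjection $G^{ab} \twoheadrightarrow H^{ab}$, so $H^{ab}$ is a quotient of $G^{ab}$ whose order divides $|H|$ and is therefore coprime to 3. If $G^{ab}$ were an elementary 3-group, then every quotient of $G^{ab}$ would also be an elementary 3-group, and combined with the coprimality to 3 this would force $H^{ab}$ to be trivial.

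The remaining task is thus to rule out $H^{ab} = 1$. Since $H$ is a nontrivial group of odd order, the Feit--Thompson theorem implies that $H$ is solvable, and any nontrivial solvable group has nontrivial abelianization. Hence $H^{ab}$ is nontrivial with order coprime to 3, so $G^{ab}$ admits a nontrivial quotient whose order is not a power of 3, contradicting the elementary 3-group property. There is no real obstacle in this argument; the only external ingredient is the odd-order theorem, which is used only in the very mild form that a nontrivial odd-order group is not perfect.
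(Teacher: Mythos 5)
Your argument is correct, and it reaches the paper's conclusion by a cleaner mechanism. Both proofs reduce to Theorem \ref{main1} by showing that $G^{ab}$ is not an elementary $3$-group, and both ultimately rest on the Feit--Thompson theorem in the form ``a nontrivial odd-order group has nontrivial abelianization.'' The difference is in how the relevant $3'$-piece of $G^{ab}$ is produced: the paper invokes the Schur--Zassenhaus theorem to split off a complement $K$ to the normal Sylow $3$-subgroup, writes $G = K \ltimes H$, and then manipulates commutator subgroups to locate $K/[K,K]$ inside $G/[G,G]$ (a step whose notation in the paper is somewhat loose, since $G/[G,G]$ is abelian and the displayed ``semidirect product'' decomposition is not cleanly justified). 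You instead pass to the quotient $G/P$, which exists with no splitting theorem needed, and use the standard fact that a surjection $G \twoheadrightarrow G/P$ induces a surjection $G^{ab} \twoheadrightarrow (G/P)^{ab}$; since any quotient of an elementary $3$-group is again an elementary $3$-group while $(G/P)^{ab}$ is nontrivial of order coprime to $3$, the contradiction is immediate. Your route avoids Schur--Zassenhaus entirely and sidesteps the commutator bookkeeping, at no cost in generality; the only shared nonelementary input is the odd-order theorem, which neither proof can avoid.
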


\begin{proof}
Let $H$ be a proper normal Sylow 3-subgroup of $G$. Then $H$ has a nontrivial complement $K$ in $G$ by the Schur–Zassenhaus theorem, meaning $G=K \ltimes H$, where $G=KH$ and $K \cap H=\{1_G\}$. One has
$$[G,G]=\left \langle [K,K],[H,K],[H,H] \right \rangle .$$
It follows that 
$$G/[G,G]=(K/[K,K]) \ltimes  ([H,H][H,K]).$$
Since $K$ is a nontrivial odd-order group, its abelianization is nontrivial by the Feit-Thompson theorem. Moreover, since $|K|$ is not divisible by 3, then $K/[K,K]$ is not an elementary 3-group, hence $G/[G,G]$ is not an elementary 3-group. The claim then follows from Theorem \ref{main1}.
\end{proof}

\section{Split extensions}\label{4}

In this section, we consider split extensions, or equivalently semidirect products of odd-order groups with harmoniously matched groups. Since every non-binary abelian group is harmoniously matched except $\mathbb{Z}_3$, the results of this section allow us to construct many examples of even-order R-harmonious groups. 
 
\begin{lemma}\label{brh}
Let $H$ be an odd-order normal subgroup of a group $G$ with a harmoniously matched complement $K$ so that $G=HK$ and $H \cap K=\{1_G\}$. Then $G$ is harmoniously matched. 
\end{lemma}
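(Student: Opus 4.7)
The plan is to mimic the proof of Lemma \ref{mext}, replacing the cyclic generator $x$ by the harmoniously matched pair of sequences in the complement $K$. Write $m=|H|$ and $n=|K|$, so $|G|=mn$. Let $k_0,k_1,\ldots,k_{n-1}$ and $k_1',k_2',\ldots,k_{n-1}'$ be the harmonious and R-harmonious sequences in $K$ witnessing that $K$ is harmoniously matched, so that $k_0=k_1'$, $k_{n-1}=k_{n-1}'$, and $k_0k_1\cdots k_{n-1}=k_1'k_2'\cdots k_{n-1}'=1_G$. Choose also a symmetric harmonious sequence $h_0=1_H,h_1,\ldots,h_{m-1}$ in $H$ with $h_ih_{m-i}=1_G$ for $1\leq i\leq m-1$; such a sequence exists since $H$ is odd-order. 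Set $\sigma_i=k_0k_1\cdots k_i$ (so that $\sigma_{n-1}=1_G$), with convention $\sigma_{-1}=1_G$, and define $g_i$ and $g_i'$ by the same formulas (\ref{eqdefg}) and (\ref{eqdefgp}) of Lemma \ref{mext}, reading $k_r$ in place of $x^{k_r}$ and $k_r'$ in place of $x^{k_r'}$.

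The verification then runs in parallel with that of Lemma \ref{mext}, with one conceptual change: rather than using that distinct powers of $x$ lie in distinct cosets of $H$, we use that $H$ is normal in $G=HK$ with $H\cap K=\{1_G\}$, so $G$ decomposes as the disjoint union $\bigsqcup_{k\in K}Hk$. Normality gives $\sigma_{r-1}^{-1}h_p\sigma_r\in Hk_r$ and, for intra-row consecutive products, $\sigma_{r-1}^{-1}h_p^2\sigma_{r+1}\in H(k_rk_{r+1})$. Equality of two such terms forces equality of the $K$-coset index via harmoniousness of the $k_i$ in $K$, after which the $H$-components must agree and the bijectivity of squaring on an odd-order group separates the row index $p$. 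For inter-row transitions, including the cyclic wrap-around $g_{mn-1}g_0$, every product collapses to the form $k_{n-1}(h_ph_{p+1})k_0\in H(k_{n-1}k_0)$ (using $h_0=1_H$), and as $p$ ranges cyclically over $\{0,\ldots,m-1\}$ these $m$ elements exhaust $H(k_{n-1}k_0)$: the set $\{h_ph_{p+1}\}$ is all of $H$ by the harmoniousness of $h_0,\ldots,h_{m-1}$, and conjugation by $k_{n-1}$ is an automorphism of $H$.

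The matching identities $g_0=g_1'$ and $g_{mn-1}=g_{mn-1}'$ are immediate from $k_0=k_1'$ and the shared last-row formula, and the overall product identity $g_0g_1\cdots g_{mn-1}=g_1'g_2'\cdots g_{mn-1}'=1_G$ follows by telescoping each row to $h_p^n$ and cancelling via the symmetric pairing $h_ih_{m-i}=1_G$, exactly as in Lemma \ref{mext}. The main obstacle is bookkeeping the coset decomposition in the R-harmonious case: one must verify that the trivial coset $H=H\cdot 1_K$ receives exactly $m-1$ consecutive products $g_i'g_{i+1}'$ and that none of them equals $1_G$. This reduces to checking that $k_{n-1}k_0\neq 1_K$, which holds because $k_{n-1}k_0=k_{n-1}'k_1'$ is a cyclic consecutive product of the R-harmonious sequence in $K$ and hence lies in $K\setminus\{1_K\}$; consequently all $m$ transition products fall in the coset $H(k_{n-1}k_0)\neq H$, safely disjoint from $\{1_G\}$.
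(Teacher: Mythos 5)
Your construction is exactly the one in the paper: the same $m\times n$ array with rows $\sigma_{r-1}^{-1}h_p\sigma_r$ built from $\sigma_i=k_0k_1\cdots k_i$, with the top row taken from the harmonious sequence of $K$ in one case and from the R-harmonious sequence in the other, and the same telescoping argument for the total product. Your verification details (coset bookkeeping via normality of $H$, bijectivity of squaring on the odd-order group $H$, and the observation that $k_{n-1}k_0=k_{n-1}'k_1'\neq 1_K$ keeps the identity out of the boundary products) are correct and in fact supply the "straightforward to check" steps that the paper leaves implicit.
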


\begin{proof}
Since $K$ is harmoniously matched, there exist a harmonious sequence $u_0,u_1,\ldots, u_{n-1}$ and an R-harmonious sequence $u'_1,u_2',\ldots, u_{n-1}'$ in $K$ such that $u_0=u_1'$, $u_{n-1}=u'_{n-1}$, and $u_1'u_2'\cdots u_{n-1}'=u_0u_1\cdots u_{n-1}$ $=1_G$. Since $H$ is odd-order, it has a symmetric harmonious sequence $h_0,h_1,\ldots, h_{m-1}$ such that $h_ih_{m-i}=1_G$ for all $1\leq i \leq m-1$. Define  
$$\sigma_i=u_0u_1\cdots u_i,~\mbox{for $0\leq i \leq n-1$}.$$
In particular, $\sigma_{n-1}=u_0u_1\cdots u_{n-1}=1_G$. Then the sequence:
$$
\begin{array}{llllll}
u_1' & u_2' & \ldots &  u_{n-1}' & &  \\
h_1\sigma_0 & \sigma_0^{-1}h_1\sigma_1 & \sigma_1^{-1}h_1\sigma_2 &\sigma_2^{-1}h_1\sigma_3 & \ldots & \sigma_{n-2}^{-1}h_1\sigma_{n-1} \\
h_2\sigma_0 & \sigma_0^{-1}h_2\sigma_1 & \sigma_1^{-1}h_2\sigma_2 &\sigma_2^{-1}h_2\sigma_3 & \ldots & \sigma_{n-2}^{-1}h_2\sigma_{n-1}\\
 \vdots & \vdots & \vdots & \vdots & \vdots & \vdots \\
h_{m-1}\sigma_0 & \sigma_0^{-1}h_{m-1}\sigma_1 & \sigma_1^{-1}h_{m-1}\sigma_2 &\sigma_2^{-1}h_{m-1}\sigma_3 & \ldots & \sigma_{n-2}^{-1}h_{m-1}\sigma_{n-1}
\end{array}
$$
is R-harmonious, and the same sequence with the first row replaced by $u_0,\ldots,u_{n-1}$ is harmonious. It is straightforward to check that these two sequences begin and end at the same terms and that the product of all their terms equals the identity element. 
\end{proof}

The following theorem follows by repeatedly applying Lemma \ref{brh} and the result that every abelian non-binary group of order $>3$ is harmoniously matched \cite{JG}. 

\begin{theorem}\label{main2}
If $G=(((K \ltimes H_1)\ltimes H_2) \ldots )\ltimes H_k$, where $H_1,\ldots H_k$ are odd-order groups, $k\geq 0$, and $K$ is an abelian non-binary group of order $>3$, then $G$ is harmoniously matched. 
\end{theorem}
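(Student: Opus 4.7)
The plan is to induct on $k$, using the result of Beals, Gallian, Headley, and Jungreis \cite{JG} that every abelian non-binary group of order greater than $3$ is harmoniously matched as the base case, and Lemma \ref{brh} as the inductive step.

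For the base case $k=0$, the group $G=K$ is an abelian non-binary group of order greater than $3$, so \cite{JG} immediately gives that $G$ is harmoniously matched.

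For the inductive step, set $G_j = (((K\ltimes H_1)\ltimes H_2)\ldots)\ltimes H_j$ and assume $G_j$ is harmoniously matched for some $j\geq 0$. Then $G_{j+1} = G_j \ltimes H_{j+1}$, and by the very definition of the outermost semidirect product, $H_{j+1}$ is a normal subgroup of $G_{j+1}$ with complement $G_j$. Since $|H_{j+1}|$ is odd and $G_j$ is harmoniously matched, the hypotheses of Lemma \ref{brh} are met with $(G,H,K)$ taken to be $(G_{j+1}, H_{j+1}, G_j)$. Lemma \ref{brh} then yields that $G_{j+1}$ is harmoniously matched. Iterating up to $j=k$ gives the theorem.

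The main obstacle has effectively been discharged already: Lemma \ref{brh} carries out the nontrivial combinatorial lift across an odd-order normal factor, and \cite{JG} provides the abelian starting point. The only point that needs a brief mention is that at each level $H_{j+1}$ is normal in the full iterated group $G_{j+1}$ rather than in some smaller piece; this is immediate from the way the semidirect product is formed, so the induction closes with no further work.
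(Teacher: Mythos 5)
Your proposal is correct and matches the paper's argument exactly: the paper also obtains Theorem \ref{main2} by iterating Lemma \ref{brh} starting from the result of \cite{JG} that abelian non-binary groups of order greater than $3$ are harmoniously matched. Your write-up simply makes the induction explicit, which the paper leaves as a one-line remark.
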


\begin{example}
Let $G=(\mathbb{Z}_3 \times \mathbb{Z}_3) \ltimes \mathbb{Z}_9$ be the semidirect product induced by the map $\phi: \mathbb{Z}_3 \times \mathbb{Z}_3 \rightarrow {\rm Aut} ( \mathbb{Z}_9)=\mathbb{Z}_9^\times$ with 
$$\phi(1,0)=4,~\phi(0,1)=1.$$
In terms of generators and relations, we have
$$G= \langle a,x,y | a^9=x^3=y^3=1,xy=yx,xax^{-1}=a^4, yay^{-1}=a \rangle.$$
Since $[x,a]=xax^{-1}a^{-1}=a^3$, $[y,a]=1$, and $[x,y]=1$, we have $G^{ab}=\langle a^3 \rangle \cong \mathbb{Z}_3$ is an elementary abelian 3-group and Theorem \ref{main1} is not applicable. However, $G$ is harmoniously matched by Theorem \ref{main2}.
\end{example}

If $k$ is odd, $D_{2k}$, the dihedral group of order $2k$, does not satisfy the Hall-Paige condition, hence it is not harmonious or R-harmonious.

\begin{example}
If $\gcd(m,n)=1$, then
$$D_{2mn}=D_{2n} \ltimes \mathbb{Z}_m,$$
so, by Theorem \ref{main2}, if $m$ is odd and $\gcd(m,n)=1$, and if $D_{2n}$ is harmoniously matched, then $D_{2mn}$ is harmoniously matched. By Example \ref{dihedral}, the groups $D_{12}$ and $D_{16}$ are harmoniously matched. It follows that $D_{12m}$ is harmoniously matched if $\gcd(m,6)=1$, and $D_{16m}$ is harmoniously matched if $m$ is odd.\end{example}

The next example shows the limitations of our approach in Theorems \ref{main1} and \ref{main2}. 

\begin{example}The group $\mathbb{Z}_3 \ltimes \mathbb{Z}_7$ is the semidirect product induced by the map $\phi: \mathbb{Z}_3 \rightarrow {\rm Aut}(\mathbb{Z}_7)=\mathbb{Z}_7^\times \cong \mathbb{Z}_6$: $\phi(1)=2$. In generators and relations notation, one has
$$\mathbb{Z}_3 \ltimes \mathbb{Z}_7= \langle x,y | x^7=y^3=1,yxy^{-1}=x^2 \rangle.$$
Theorems \ref{main1} and \ref{main2} fail to show that this group is R-harmonious. However, the sequence 
$$x^4,y^2,x^4y,x^5y,x^5,x^6y,y,x^6,x^3y,x^4y^2,x^6y^2,xy,x,x^2y^2,xy^2,x^2,x^3,x^5y^2,x^2y,x^3y^2,$$
and its consecutive products:
$$x^4y^2,x^2,y^2,xy,x^4y,x^6y^2,x^5y,x^2y,x^4,y,x^3,x^3y,x^3y^2,x^6y,x^2y^2,x^5,xy^2,x^6,x,x^5y^2,$$
both list the non-identity elements of the group exactly once. 
\end{example}

\end{document}